\def\Kb{{K^{\mathrm {sep}}}}
\def\Kbp{{K_\gp^{\mathrm {sep}}}}
\def\O{{\mathcal O}}
\def\Op{{{\mathcal O}_\gp}}
\def\Z{{\mathbb Z}}
\def\M{{\mathcal M}}
\def\Q{{\mathbb Q}}
\def\LO{{\Lambda_\O}}
\def\LZ{{\Lambda_\Z}}
\def\gp{{\mathfrak p}}
\def\gq{{\mathfrak q}}
\def\ga{{\mathfrak a}}
\def\gb{{\mathfrak b}}
\def\gf{{\mathfrak f}}
\def\gd{{\mathfrak d}}
\def\gf{{\mathfrak f}}
\def\gr{{\mathfrak r}}
\def\LOp{{\Lambda_{\O_\gp}}}
\def\DR{{\mathrm{DR}}}
\def\Map{{\mathrm{Map}}}
\def\alg{{\mbox{-}\mathrm{alg}}}
\def\Hom{{\mathrm{Hom}}}
\def\End{{\mathrm{End}}}
\def\lcm{{\mathrm{lcm}}}
\def\fin{{\mathrm{fin}}}
\def\unr{{\mathrm{unr}}}
\def\ord{{\mathrm{ord}}}
\def\Cl{{\mathrm{Cl}}}
\def\G{{G_K}}
\def\I{{I_K}}
\def\Gb{{\bar{G}}}
\def\Gp{{G_\gp}}
\def\Ip{{I_\gp}}
\def\longrightisomap{{\buildrel \sim\over\longrightarrow}}
\newcommand{\longmap}{{\,\longrightarrow\,}}
\newtheorem{thm}[subsection]{Theorem}
\newtheorem{Proposition}[subsection]{Proposition}
\begin{document}
\title{Lambda actions of rings of integers}
\author[J.~Borger, B.~de~Smit]{James Borger, Bart de Smit}
\email{james.borger@anu.edu.au, desmit@math.leidenuniv.nl}
\thanks{This preprint is a preliminary version dating from 2006. We are making it available in this
form because some people would like to cite it now. The final version should be available before long.}

\begin{abstract}
Let $\O$ be the ring of integers of a number field $K$.
For an $\O$-algebra $R$ which is torsion free as an $\O$-module
we define what we mean by a $\Lambda_\O$-ring structure on $R$.
We can determine whether a finite \'etale $K$-algebra $E$ with
$\Lambda_\O$-ring structure has an integral model in terms of a
Deligne-Ribet monoid of $K$. This a commutative monoid whose
invertible elements form a ray class group.
\end{abstract}

\maketitle

\section{Introduction} Let $\O$ be a Dedekind domain with quotient
field $K$. Denote the set of maximal ideals of $\O$ by $\M$.  We
assume that $k(\gp)=\O/\gp$ is a finite field for each $\gp\in\M$.

Let $E$ be a torsion-free commutative $\O$-algebra. Then for
each $\gp\in\M$ the algebra $E/\gp E= E \otimes_\O k(\gp)$ over
$k(\gp)$ has a natural $k(\gp)$-algebra endomorphism
$F_\gp\colon x\mapsto x^{\#k(\gp)}$, which is called the Frobenius
endomorphism.  By a Frobenius lift of $E$ at $\gp$ we mean an
$\O$-algebra endomorphism $\psi_\gp$ such that $\psi_\gp\otimes
k(\gp)=F_\gp$.  We define a $\LO$-structure on $E$ to be a map
$\M\to\End_{\O\alg}(E)$, denoted $\gp\mapsto\psi_\gp$, such that 

\begin{enumerate}
	\item 
$\psi_\gp$ is a Frobenius lift at $\gp$ for each $\gp\in M$.
	\item 
$\psi_\gp \psi_\gq =\psi_\gq \psi_\gp$ for all $\gp$, $\gq\in\M$.
\end{enumerate}
By a $\LO$-ring we mean a torsion-free $\O$-algebra with $\LO$-structure.

If $\O$ is local then the commutation condition (2) is vacuous.  For
all $\gp\in\M$ for which $E\otimes k(\gp)=0$ the lifting condition (1)
is vacuous.  In particular, if $E$ is an algebra over $K$, then any
commuting collection of $K$-automorphisms of $E$ indexed by the
maximal ideals of $\O$ is a $\LO$-structure on $E$.

A $\LZ$-structure on a ring without $\Z$-torsion is the same as a
$\lambda$-ring structure \cite{Wilkerson}. For instance, for any abelian group $A$ we
have a natural $\LZ$-stucture on the group ring $\Z[A]$ given by
$\psi_p(a)=a^p$ for $a\in A$ and $p$ a prime number. 

If $\O$ is the ring of integers of a number field $K$, and 
$E$ is the ring of integers of a subfield $L$ of the strict Hilbert
class field of $K$, then $E$ has a unique $\LO$-structure: 
$\psi_\gp$ is the Artin symbol of $\gp$ in the field
extension $K\subset L$.


In an earlier paper \cite{Borger-deSmit:Integral-models}, we showed that a $\LZ$-ring that is reduced
and finite flat over $\Z$ is a $\LZ$-subring of $\Z[C]^n$ for some
finite cyclic group $C$ and positive integer $n$. The proof uses the
explicit description of ray class fields over $\Q$ as cyclotomic
fields. Over a number field class field theory is less explicit, and
the generalizations we present in the present paper are by consequence
less explicit.  However, we can still give a very similar criterion
for a $\LO$-structure on a finite \'etale $K$-algebra $E$ to come from
an $\LO$-subring which is finite flat as an $\O$-module see Theorem
\ref{global-thm} below.
Such a $\LO$-subring is called an integral $\LO$-model of the
$\LO$-ring~$E$.

Let $I(\O)$ be the monoid of non-zero ideals of $\O$, with ideal
multiplication as the monoid operation.  It is the free commutative
monoid on $\M$.

Let $\Kb$ be a separable closure of $K$, and let $\G$ be the
Galois group of $\Kb$ over $K$. It is a profite group. By a $\G$-set
$X$ we mean a finite discrete set with a continuous $\G$-action.
By Grothendieck's formulation of Galois theory, a finite
\'etale $K$-algebra $E$ is determined by the $\G$-set $S$ consisting
of all $K$-algebra homomorphisms $E\to \Kb$.
Giving a $\LO$-structure on $E$ then translates to giving a monoid map
$I(\O)\to\Map_\G(S,S)$.  By giving $I(\O)$ the discrete topology, we
see that the category of $\LO$-rings whose underlying $\O$-algebra is
a finite \'etale $K$-algebra, is anti-equivalent to the category of
finite discrete sets with a continuous action of the monoid
$I(\O)\times \G$.

Let us first suppose that $\O$ is complete discrete valuation ring with
maximal ideal $\gp$. Then $I(\O)$ is isomorphic as a monoid to the
monoid of non-negative integers with addition.  Let $\I\subset\G$ be
the inertia subgroup.  Then $\I$ is normal in $\G$ and $\G/\I$ is the
absolute Galois group of $k(\gp)$, which contains the Frobenius
element $F\in \G/\I$ given by $x\mapsto x^{\#k(\gp)}$. Thus, $F$ acts
on any $\G$-set on which $\I$ acts trivially.

\begin{thm}
\label{local-thm}
Suppose $\O$ is complete discrete valuation ring with maximal ideal $\gp$.
Let $E$ be a finite \'etale $K$-algebra with $\LO$-structure,
and let $S$ be the set of $K$-algebra maps from $E$ to $\Kb$.
Then $K$ has an integral $\LO$-model if and only if the
action of $I(\O)\times \G$ on $S$ satisfies the two conditions
\begin{enumerate}
\item the group $\I$ acts trivially on $S_{\unr}=\bigcap_{\ga\in I(\O)}\ga S$;
\item $\gp\in I(\O)$ and $F\in \G/\I$ act in the same way on $S_{\unr}$.
\end{enumerate}
\end{thm}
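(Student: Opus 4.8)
Write $q=\#k(\gp)$. The plan is to pass to the $(I(\O)\times\G)$-set $S$, where the $\LO$-structure is the single $\G$-equivariant map $u:=\psi_\gp\in\Map_\G(S,S)$ (so $\gp\in I(\O)$ acts as $u$ and $S_\unr=\bigcap_\ga\ga S=\bigcap_{n\ge0}u^n(S)$), and to isolate the \emph{unramified core}. As $S$ is finite the chain $S\supseteq u(S)\supseteq u^2(S)\supseteq\cdots$ stabilizes, $S_\unr=u^N(S)$ for $N\gg0$, and $u$ restricts to a permutation of $S_\unr$; dually $E_\unr:=\bigcap_n\psi_\gp^n(E)$ is a $\LO$-subalgebra of $E$ on which $\psi_\gp$ acts as a finite-order $K$-algebra automorphism, and it corresponds to $S_\unr$ with its $u$-action. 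Unwinding definitions, conditions (1) and (2) say exactly that $E_\unr$ is unramified over $K$ and that $\psi_\gp|_{E_\unr}$ is the canonical Frobenius automorphism (the one reducing to $x\mapsto x^q$ modulo $\gp$); so the theorem says that $E$ has an integral $\LO$-model precisely when its unramified core visibly does. One further preliminary reduction: since $u$ preserves each $(I(\O)\times\G)$-orbit, $\psi_\gp$ preserves each factor in the corresponding product decomposition of $E$, a finite product of integral $\LO$-models is an integral $\LO$-model and each coordinate projection of one is one; so one may assume $S$ is a single orbit, in which case $E_\unr$ is a field.

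For ``only if'', let $A\subseteq E$ be an integral $\LO$-model. Then $A\subseteq\O_E$, the integral closure of $\O$ in $E$, which is finite over $\O$ and $\psi_\gp$-stable. Put $A_\unr:=\bigcap_n\psi_\gp^n(A)$; as above this stabilizes, $A_\unr=\psi_\gp^N(A)$ is a $\psi_\gp$-stable order in $E_\unr$ with $\psi_\gp$ acting by a finite-order automorphism, and $\psi_\gp^N$ carries the submodule $A_\unr$ isomorphically onto itself. Hence $A=A_\unr\oplus\ker(\psi_\gp^N|_A)$ as $\O$-modules, so $A/A_\unr$ is $\O$-free and $A_\unr\cap\gp A=\gp A_\unr$; therefore the Frobenius-lift congruence $\psi_\gp(x)\equiv x^q\pmod{\gp A}$, valid on $A$, restricts to the same congruence on $A_\unr$. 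Thus $\psi_\gp|_{A_\unr}$ is a Frobenius lift that is an automorphism, so the $q$-power map is bijective on the finite $k(\gp)$-algebra $A_\unr/\gp A_\unr$; but that map is injective only on a reduced algebra (a nonzero nilradical contains a nonzero square-zero element), so $A_\unr/\gp A_\unr$ is reduced and $A_\unr$ is \'etale over $\O$. Then $E_\unr$ is unramified, which is (1); and on an \'etale $\O$-algebra a Frobenius lift is unique and equals the canonical one, which is (2).

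For ``if'', one must build a model. By (1) and (2) the maximal order $B:=\O_{E_\unr}$ is \'etale, $\psi_\gp$-stable, with $\psi_\gp|_B$ the canonical Frobenius lift, so $\psi_\gp(b)\equiv b^q\pmod{\gp B}$. The task is to enlarge $B$ to an order $A$ in $E$ with $\psi_\gp(A)\subseteq A$ and $\psi_\gp(x)\equiv x^q\pmod{\gp A}$; equivalently $\psi_\gp$ must annihilate the nilradical of $\bar A:=A/\gp A$ and induce the canonical Frobenius on $\bar A/\mathrm{nil}$. I would take $A$ to be a \emph{non-uniform fattening} of $\O_E$ along the directions outside $E_\unr$: since $\psi_\gp^N(\O_E)=B$, every such direction is carried into $S_\unr$ after at most $N$ applications of $\psi_\gp$, and one multiplies each direction by a power of $\gp$ that \emph{decreases} with its depth (the number of steps to reach $S_\unr$), so that modulo $\gp$ each application of $\psi_\gp$ moves a nilpotent basis vector strictly further into $\gp A$, hence to $0$ in $\bar A$. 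Pinning down these exponents and checking that the resulting $A$ is a subring, is $\psi_\gp$-stable, has full rank (it contains $\gp^c\O_E$ for some $c$), and satisfies the congruence on the nose is the main obstacle. That it is a real obstacle is shown by the naive uniform attempts $A=B+\gp^m\O_E$ and $A=\{x\in\O_E:\psi_\gp^n(x)\equiv x^{q^n}\!\pmod{\gp\O_E}\ \text{for all }n\}$: both are $\psi_\gp$-stable but fail the congruence as soon as some direction takes more than one step to collapse, and the grading of the fattening by depth is precisely what repairs this.
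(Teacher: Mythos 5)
Your ``only if'' direction is correct, and it takes a mildly different route from the paper's: the paper passes to the image $A_0$ of the model $A$ in the quotient factor $E_0$ and uses a discriminant comparison to see that $f_0(A_0)=A_0$, whereas you work inside the subalgebra $E_\unr=\bigcap_n\psi_\gp^n(E)$, set $A_\unr=\bigcap_n\psi_\gp^n(A)=\psi_\gp^N(A)$, and use the splitting $A=A_\unr\oplus\ker(\psi_\gp^N|_A)$ to see that the congruence $\psi_\gp(x)\equiv x^q\bmod\gp A$ survives restriction to $A_\unr$; both arguments then finish identically (a Frobenius lift that is an automorphism makes the $q$-power map bijective on the special fibre, so the fibre is reduced, $A_\unr$ is unramified, and Proposition \ref{unramified} yields (1) and (2)). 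Two small points: the stabilization of the chain $\psi_\gp^n(A)$ needs the finite order of $\psi_\gp$ on $E_\unr$ (not just finiteness of $S$), and after your one-orbit reduction $E_\unr$ need not be a field ($\G$ can act non-transitively on $S_\unr$ while $\gp$ permutes it transitively); neither affects the argument.

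The genuine gap is the ``if'' direction: you describe the intended model --- rescale the part of the maximal order lying over $S\setminus S_\unr$ by powers of $\gp$ decreasing with depth --- but you explicitly leave the choice of exponents and the verifications (subring, $\psi_\gp$-stability, full rank, the congruence) as ``the main obstacle,'' so the existence half of the theorem is simply not proved in your write-up. This is exactly where the paper does its work: decompose $S=S_0\sqcup\cdots\sqcup S_n$ by depth, so $E=E_0\times\cdots\times E_n$ with $\psi_\gp(e)_i=f_i(e_{i-1})$, use that $f_0$ is invertible to build the section $i\colon E_0\to E$, and take $A=i(R_0)\oplus(0\times\ga_1\times\cdots\times\ga_n)$ with $\ga_i$ an ideal of $R_i$; then $\psi_\gp(a)\equiv a^{\#k(\gp)}\bmod\gp A$ for all $a\in A$ is equivalent to the two ideal conditions $\ga_i^{\#k(\gp)}\subset\gp\ga_i$ and $f_i(\ga_{i-1})\subset\gp\ga_i$. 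With $\ga_i=\gp^{c_i}R_i$ these say $c_i\ge 1$ and $c_{i-1}\ge c_i+1$, so strictly decreasing exponents such as $c_i=n+1-i$ work --- confirming your ``decreases with depth'' intuition, and indeed the condition $f_i(\ga_{i-1})\subset\gp\ga_i$ forces that monotonicity (the exponents $\ga_i=\gp^iR_i$ as printed in the paper's proof should be read with this decreasing indexing). The missing piece is only a few lines, but until those exponents are pinned down and the two displayed inclusions are checked, your proposal establishes only one implication of the theorem.
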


\medskip\noindent
See Section 2 for the proof.

Next, let us assume that $\O$ is the ring of integers in a number
field. 

In order to phrase our global result we first recall the definition
of the Deligne-Ribet
monoid. A cycle of $K$ is a formal product $\gf=\prod_\gp\gp^{n_\gp}$,
where the product ranges over all primes of $K$, both finite and
infinite, all $n_\gp$ are non-negative integers, only finitely many of
which are non-zero, and we have $n_\gp\in\{0,1\}$ for real primes
$\gp$, and $n_\gp=0$ for complex primes $\gp$. 
The finite part of $\gf$ is
$\gf^{\fin}=\prod_{\gp<\infty}\gp^{n_\gp}$, which can be viewed as an
element of $I(\O)$.  We write $\ord_\gp(\gf)=n_\gp$.

For a cycle $\gf$ we say that two non-zero $\O$-ideals $\ga$ and
$\gb$ are $\gf$-equivalent if $x\ga=\gb$ for some $x\in K^*$
with $x>0$ at all real places $\gp$ with $\ord_\gp(\gf)>0$, and
$\ord_\gp(x-1) +\ord_\gp(\ga)\ge \ord_\gp(\gf)$ at all finite places $\gp$.
One can check that this is an equivalence relation, and that the
multiplication of ideals is well-defined on the quotient set.
Thus, the quotient set is a monoid, the Deligne-Ribet-monoid,
and we denote it by $\DR(\gf)$.

It is not hard to see that the ray class group $\Cl(\gf)$ is
the group of invertible elements of $\DR(\gf)$. 
Also, $\DR(1)$ is a group: it is the class group of $\O$.
More generally, for each ideal $\gd$ dividing 
$\gf^{\fin}$ we can consider the
map $i_\gd\colon\; \Cl(\gf/\gd)\to \DR(\gf)$ that sends the class of an ideal $\ga$
to the class of $\ga\cdot \gd$. These maps give rise to
a bijection
\[
i={\textstyle \coprod} i_\gd\colon\;
\coprod_{\gd\mid\gf^\fin} \Cl(\gf/\gd) \longrightisomap \DR(\gf).
\]

\begin{thm}
\label{global-thm}
Suppose $\O$ is the ring of integers of a number field $K$.
Let $E$ be a finite \'etale $K$-algebra with $\LO$-structure,
and let $S$ be the set of $K$-algebra maps from $E$ to $\Kb$.
Then $K$ has an integral $\LO$-model if and only
if there is a cycle $\gf$ of $K$ so that the action of
$G_{\Kb}\times I(\O)$ on $S$ factors (necessarily uniquely)
through the map
\[
\G\times I(\O) \longmap \DR(\gf),
\]
which is the product of the Artin symbol $\G\to \Cl(\gf)\subset
\DR(\gf)$ on the first coordinate, and the quotient map
$I(\O)\to\DR(\gf)$ on the second.  
\end{thm}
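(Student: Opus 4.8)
The plan is to prove both implications by localizing at the finite primes of $\O$ and invoking Theorem \ref{local-thm}, then repackaging the resulting family of local conditions using global class field theory. The basic reduction is this: a finite flat $\O$-subalgebra $A\subseteq E$ with $A\otimes_\O K=E$ is the same thing as a family $(A_\gp)_{\gp\in\M}$ of finite flat $\O_\gp$-subalgebras $A_\gp\subseteq E\otimes_K K_\gp$ spanning $E\otimes_K K_\gp$ and equal, for all but finitely many $\gp$, to the maximal order; and $A$ is a $\LO$-subring precisely when every $A_\gp$ is stable under all the $\psi_\gq$. Since any endomorphism of a finite \'etale $K_\gp$-algebra preserves its maximal order, and (by formal \'etaleness) for $\gp$ unramified in $E$ the maximal order carries a unique lift of $F_\gp$, one may take $A_\gp$ to be the maximal order at the infinitely many primes unramified in $E$; at the finitely many remaining primes, any single integral $\LOp$-model can be enlarged — without destroying the Frobenius-lift property, as a short computation with Frobenius shows — to the subalgebra generated by its translates under the finite image of $I(\O)$ in $\End_{K_\gp\alg}(E\otimes_K K_\gp)$. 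Hence $E$ has an integral $\LO$-model if and only if, for every finite prime $\gp$, the action of $\Gp\times I(\Op)$ on $S$ — where $S$ is made a $\Gp$-set by a choice of place of $\Kb$ above $\gp$ and $I(\Op)$ acts through $\psi_\gp$ — satisfies conditions (1) and (2) of Theorem \ref{local-thm}. Everything then reduces to matching this with the single requirement that the $\G\times I(\O)$-action factors through $\DR(\gf)$ for some cycle $\gf$.

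For necessity, suppose an integral $\LO$-model exists. For all but finitely many $\gp$ the algebra $E$ is unramified at $\gp$, the local model is the maximal order, and $\psi_\gp$ is the unique lift of $F_\gp$; formal \'etaleness then forces $\psi_\gp$ to act on $S$ through $\G$ as a Frobenius element at a place above $\gp$. Since $\psi_\gp$ is attached to $\gp$ with no choice of such place, and — by Chebotarev over $K$ — every conjugacy class of the image $\bar G$ of $\G$ in $\mathrm{Sym}(S)$ arises in this way, $\bar G$ must be abelian. Global class field theory now produces a cycle $\gf_0$ for which $\G\to\bar G$ equals $\G\xrightarrow{\mathrm{Art}}\Cl(\gf_0)\twoheadrightarrow\bar G$, so $\psi_\gp$ acts as $\mathrm{Art}(\gp)$ for $\gp\nmid\gf_0$. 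Enlarge $\gf_0$ to a cycle $\gf$ with $\ord_\gp\gf$, for the finitely many $\gp\mid\gf_0$, exceeding the stage at which $\psi_\gp$ reaches its stable image. At such a $\gp$ the two conditions of Theorem \ref{local-thm} — $\Ip$ acting trivially on the stable image of $\psi_\gp$, and $\psi_\gp$ agreeing there with $F$ — together with the class field theory of the ray class field of the prime-to-$\gp$ part of $\gf$ pin $\psi_\gp$ down, on all of $S$, to be the action of the image of $\gp$ under the quotient $I(\O)\to\DR(\gf)$. As $I(\O)$ is free on $\M$ and $\DR(\gf)$ is generated by the images of the primes, the $I(\O)$-action on $S$ factors through $I(\O)\to\DR(\gf)$; together with the Artin description of the $\G$-action this is exactly the asserted factorization, unique because $\G\times I(\O)\to\DR(\gf)$ is surjective.

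For sufficiency, suppose the action factors through $\DR(\gf)$. By the reduction above it is enough to verify conditions (1) and (2) of Theorem \ref{local-thm} at each finite prime $\gp$. For $\gp\nmid\gf$ this is immediate: $\Ip$ lies in the kernel of $\G\xrightarrow{\mathrm{Art}}\Cl(\gf)$ and so acts trivially on $S$; $\psi_\gp$ is the automorphism $\mathrm{Art}(\gp)\in\Cl(\gf)$, so the stable image is all of $S$; and $\mathrm{Art}(\gp)$ acts as $F$ since $\gp$ is unramified in the ray class field. For the finitely many $\gp\mid\gf^\fin$ one examines the semigroup generated by the image of $\gp$ in $\DR(\gf)=\coprod_{\gd\mid\gf^\fin}\Cl(\gf/\gd)$: it eventually lands in the component indexed by $\gd=\gp^{\ord_\gp\gf}$, and on the corresponding $\Gp$-stable subset of $S$, which one identifies with the stable image of $\psi_\gp$, the Artin map kills $\Ip$ and sends $\gp$ to a Frobenius; thus (1) and (2) hold. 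Assembling the local models — the maximal order at the primes not dividing $\gf$, and the $\psi$-stable enlargements above at the finitely many primes dividing $\gf^\fin$ — produces an integral $\LO$-model of $E$.

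The main obstacle, in both directions, is the behaviour at the primes dividing $\gf$, where $E$ is ramified, $F_\gp$ is not injective, and $\psi_\gp$ need not be invertible. There one must match the ``degenerate Frobenius lift'' data allowed by the proof of Theorem \ref{local-thm} with the combinatorics of the non-invertible part of $\DR(\gf)$ — namely the maps $i_\gd\colon\Cl(\gf/\gd)\to\DR(\gf)$ and the behaviour of the Artin map on the inertia subgroups inside $\Cl(\gf)$ — and getting the exponents right, in particular choosing $\ord_\gp\gf$ large enough on the necessity side, is the delicate point. By contrast the unramified primes (Chebotarev plus the rigidity of Frobenius lifts on finite \'etale algebras) and the passage between local and global (gluing lattices over the Dedekind domain $\O$) are routine.
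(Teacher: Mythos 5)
Your opening reduction --- that $E$ has an integral $\LO$-model if and only if conditions (1) and (2) of Theorem \ref{local-thm} hold at every finite prime $\gp$ --- is false, and since you make it the backbone of the sufficiency direction (``it is enough to verify conditions (1) and (2) at each finite prime''), this is a genuine gap. Counterexample: take $E=K\times K$ with trivial $\G$-action on $S=\{s_1,s_2\}$, and let every $\psi_\gp$ be the endomorphism $(a,b)\mapsto(a,a)$. These commute, so this is a $\LO$-structure; at each $\gp$ one has $S_\unr=\{s_1\}$, inertia acts trivially and $\gp$ acts as $F$ there, so the local conditions hold at every prime (a local model at $\gp$ is $\{(a,b)\in\Op\times\Op: a\equiv b \bmod \gp\}$). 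But a global model $A$ would satisfy $\psi_\gp(x)-x^{\#k(\gp)}\in\gp A$ for \emph{all} $\gp$, i.e.\ $a\equiv b\bmod\gp$ for every $(a,b)\in A$ and every maximal ideal $\gp$, forcing $A$ into the diagonal and contradicting $A\otimes_\O K=E$. Consistently, this action factors through no $\DR(\gf)$: any prime not dividing $\gf$ maps to a unit of $\DR(\gf)$ and so would have to act bijectively on $S$, which it does not. The precise point where your justification of the reduction breaks is the claim that at primes unramified in $E$ one may take $A_\gp$ to be the maximal order: for that, $\psi_\gp$ must be a Frobenius lift on the maximal order, i.e.\ must act as Frobenius on \emph{all} of $S$, not merely on $S_\unr$; the local conditions control only $S_\unr$, and the set of primes at which $\psi_\gp$ is not invertible need not be finite. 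Exactly this finiteness (one cycle $\gf$ outside which all $\psi_\gp$ act as Artin symbols), together with abelianness of the image of $\G$, is the global content of the theorem, and it cannot be recovered prime by prime.

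The damage is repairable, because in your actual assembly you do not use the biconditional: for $\gp\nmid\gf$ you invoke the factorization through $\DR(\gf)$ to see that $\psi_\gp$ is the Artin symbol on all of $S$, which is what makes the maximal order a valid local model there; your enlargement of a local model at $\gp\mid\gf$ to a subalgebra stable under the whole image of $I(\O)$ (with the congruence $\psi_\gp(x)\equiv x^{\#k(\gp)}$ checked on generators and propagated, since the set of $x$ satisfying it is an $\Op$-subalgebra) is correct and in fact makes the gluing step more careful than the paper's, which takes the intersection of local models inside the integral closure without comment on stability under the other $\psi_\gq$. So state the gluing as a one-way lemma (a family of local models, almost all maximal, each stable under all $\psi_\gq$, glues to a global model) and drop the false equivalence. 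In the necessity direction your argument is essentially the paper's (Chebotarev forcing the image of $\G$ to be abelian, then class field theory), but two points need tightening: the cycle must also be divisible by the primes where $\psi_\gp$ fails to be invertible even when they do not divide the conductor $\gf_0$ of the $\G$-action on $S$ (the paper encodes this in the ideal $\gr$ with $\ord_\gp(\gr)=\inf\{i:\gp^{i+1}S=\gp^iS\}$), and for the $I(\O)$-action to land in the non-unit components of $\DR(\gf)$ one must check $\gd\cdot c(\gd S)\mid\gf$ for the relevant $\gd$ (the paper takes $\gf=\lcm_{\gd\mid\gr}\gd\, c(\gd S)$); since $\gd S$ is a $\G$-stable subset of $S$ this is easy, but your prescription of merely raising $\ord_\gp\gf$ at $\gp\mid\gf_0$ does not by itself address it.
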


It follows that the category of such $\Lambda$-rings is anti-equivalent to
the category of finite discrete sets with a continuous action by the
profinite monoid $
\displaystyle {\lim_{\longleftarrow}}{}\;
\DR(\gf)$, where
the limit is taken over all cycles $\gf$ with respect to the canonical
maps $\DR(\gf)\to \DR(\gf')$ when $\gf'\mid \gf$.
When $K=\Q$ this limit is the multiplicative monoid of profinite integers.

\section{The local case}

Suppose that $\O$ is a complete discrete valuation ring with maximal
ideal $\gp$. We write $k=k(\gp)$.  Let $A$ be a reduced finite flat
$\O$-algebra. 

Let us suppose first that $A$ is unramified over $\O$, i.e., that
$k\otimes_\O A$ is \'etale over $k$. Then $k\otimes_\O A$ is a product
of finite fields.  Since $A$ is complete in its $\gp$-adic topology,
idempotents of $A/\gp A$ lift to $A$, so that $A$ is a finite product
of rings of integers in finite unramified extensions of $K$. 
Write 
\[
S=\Hom_{\O\alg}(A,\Kb)=\Hom_{K\alg}(A\otimes_\O K, \Kb).
\]
Then the inertia group $\I\subset \G$ acts trivially on $S$.  Every
finite unramified field extension $L$ of $K$ is Galois with an abelian
Galois group, and its rings of integers has a unique Frobenius lift,
which is othen called the Frobenius element of the Galois group of $L$
over~$K$.  It follows that when $A$ is unramified over $\O$, it has a
unique $\LO$-structure. This is summarized in the next Proposition.

\begin{Proposition}
\label{unramified}
Suppose that $\O$ is a complete discrete valuation ring, and that $A$
is an unramified finite flat reduced $\O$-algebra.
Then $A$ has a unique
$\LO$-structure, and the induced action of $I(\O)\times \G$ on
$S=\Hom_{K\alg}(A\otimes_\O K, \Kb)$ has the
property that the intertia group $\I$ acts trivially and that
$\gp\in\I(\O)$ acts in the same way on $S$ as $F\in\G/\I$.
\end{Proposition}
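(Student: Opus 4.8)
The plan is to split $A$ into rings of integers of finite unramified extensions of $K$, to verify the claim for each such extension using only elementary facts about unramified extensions of complete discrete valuation rings, and then to translate the result into the language of $S$. Since $\O$ is local, $\M=\{\gp\}$, so the commutation condition in the definition of a $\LO$-structure is vacuous and a $\LO$-structure on $A$ is the same as a single Frobenius lift $\psi_\gp\in\End_{\O\alg}(A)$, the monoid $I(\O)=\{\gp^n:n\ge 0\}$ then acting through $\gp^n\mapsto\psi_\gp^n$. As recalled just before the statement, $A/\gp A$ is a finite product of finite fields and, $A$ being $\gp$-adically complete, its idempotents lift uniquely from $A/\gp A$; hence $A\cong\prod_i\O_{L_i}$ with each $L_i$ a finite unramified extension of $K$. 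A Frobenius lift $\psi_\gp$ reduces modulo $\gp$ to $F_\gp$, which fixes every idempotent of $A/\gp A$ (any idempotent $e$ satisfies $e^{\#k}=e$); so by uniqueness of idempotent lifts $\psi_\gp$ fixes every idempotent of $A$, hence preserves the decomposition and restricts to a Frobenius lift on each $\O_{L_i}$. Thus it suffices to treat $A=\O_L$ with $L/K$ finite and unramified, and to show that $\O_L$ has exactly one Frobenius lift.

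For such an $L$, the residue field $\ell=\O_L/\gp\O_L$ is a finite extension of $k$, hence $\ell/k$ is Galois and, $L/K$ being unramified, so is $L/K$, with the reduction map $\mathrm{Gal}(L/K)\to\mathrm{Gal}(\ell/k)$ an isomorphism and a uniformizer $\pi$ of $\O$ also a uniformizer of $\O_L$. Given a Frobenius lift $\psi$ of $\O_L$, I would first argue that $\psi$ is injective: if $\psi(x)=0$ with $x\ne 0$, write $x=\pi^n u$ with $u\in\O_L^*$; then $\pi^n\psi(u)=0$, so $\psi(u)=0$ since $\O_L$ is a domain, contradicting the fact that the image of $\psi(u)$ in $\ell$ equals $\bar u^{\#k}\ne 0$. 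Consequently $\psi\otimes_\O K$ is an injective, and therefore bijective, $K$-algebra endomorphism of the finite-dimensional $K$-algebra $L$; as $L/K$ is Galois this means $\psi\otimes_\O K\in\mathrm{Gal}(L/K)$, and $\psi$ is its restriction to $\O_L$. Finally, $\psi$ maps to the Frobenius $x\mapsto x^{\#k}$ of $\ell/k$ under $\mathrm{Gal}(L/K)\longrightisomap\mathrm{Gal}(\ell/k)$ because $\psi\otimes k=F_\gp$, so $\psi$ is uniquely determined and equals the Frobenius element $\phi_L$ of $\mathrm{Gal}(L/K)$. This gives existence and uniqueness of the $\LO$-structure on each factor, hence on $A$, which is the first assertion.

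It then remains to identify the action on $S=\Hom_{K\alg}(A\otimes_\O K,\Kb)=\coprod_i\Hom_{K\alg}(L_i,\Kb)$. Every embedding $s\colon L_i\to\Kb$ has image in the maximal unramified subextension $\Kb^{\I}$, on which $\I$ acts trivially, so $\sigma\circ s=s$ for all $\sigma\in\I$; thus $\I$ acts trivially on $S$ (and, $\psi_\gp$ being an automorphism, $\gp S=S$, so in fact $S=S_{\unr}$). The induced action of $F\in\G/\I$ on $S$ sends $s$ to $\tilde F\circ s$ for any lift $\tilde F\in\G$, independently of the lift, whereas $\gp\in I(\O)$ sends $s$ to $s\circ\phi_{L_i}$. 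Fixing $s$ and transporting along the isomorphism $s\colon L_i\to s(L_i)\subseteq\Kb^{\I}$, both $s\circ\phi_{L_i}\circ s^{-1}$ and $\tilde F|_{s(L_i)}$ lie in $\mathrm{Gal}(s(L_i)/K)$ and induce $x\mapsto x^{\#k}$ on the residue field; since $s(L_i)/K$ is unramified the reduction map on Galois groups is injective, so the two automorphisms agree, i.e.\ $s\circ\phi_{L_i}=\tilde F\circ s$. Hence $\gp$ and $F$ act in the same way on $S$, which finishes the argument.

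I do not expect a genuine obstacle: the proof only repackages standard properties of unramified extensions of complete discrete valuation rings. The one step that needs a little care is the uniqueness of the Frobenius lift on $\O_L$ — the injectivity of $\psi$ and the use of the isomorphism $\mathrm{Gal}(L/K)\cong\mathrm{Gal}(\ell/k)$ — together with the preliminary observation that any Frobenius lift of $A$ must respect its decomposition into local factors.
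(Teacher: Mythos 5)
Your proof is correct and follows essentially the same route as the paper: lift idempotents to split $A$ into rings of integers of finite unramified extensions of $K$, observe that each such ring has a unique Frobenius lift (the Frobenius element of the Galois group), and then read off that inertia acts trivially on $S$ and that $\gp$ acts as $F$. You merely supply details the paper leaves implicit, such as why a Frobenius lift respects the idempotent decomposition and why the lift on $\O_L$ must be the Frobenius element.
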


\begin{proof}[Proof of Theorem \ref{local-thm}]
Put $S_0=S_\unr$ and for $i=1,2, \ldots 1$ let $S_i$ be the set of all
$s\in S$ with $s\not\in S_{i-1}$ and $\gp s \in S_{i-1}$.
Suppose that $S_n\ne\varnothing$, and $S_{n+1}=\varnothing$.
Let $E_i=\Map_{\G}(S_i, \Kb)$ be the corresponding finite \'etale
$K$-algebra for each $i$. Then multiplication by $\gp$
gives rise to $K$-algebra homomorphisms $f_i\colon\;E_{i-1}\to E_{i}$ for
$i=1, 2, \ldots$, and $f_0\colon\; E_0\to E_0$.
\[
f_0 \lefttorightarrow
E_0 {\buildrel f_1 \over\longrightarrow}
E_1 {\buildrel f_2 \over\longrightarrow}
\cdots
{\buildrel f_{n-1} \over\longrightarrow}
E_n
\]
Since $S=S_0\coprod S_1 \coprod \cdots\coprod S_n$ is a
decomposition of $G$ as a $\G$-set, and we have a corresponding
product decomposition of the finite \'etale $K$-algebras
$E=E_0\times \times \cdots\times E_n$. 
In terms of this decomposition $\psi_p$ is given by
\[
\psi_\gp(e_0,e_1, \ldots, e_n)=(f_0(e_0),f_1(e_0),\ldots, f_{n-1}(e_{n-1})).
\]

Since $S_0$ is closed under multiplication by $\gp$, the quotient ring
$E_0$ of $E$ is a quotient $\LO$-ring of $E$, with Frobenius lift
$f_0$ at $\gp$.
We will show that the $\LO$-ring surjection $E\to E_0$ splits.

Note that now $\gp^kS=S_0$ for sufficiently large
$k$, so $\gp$ act as a bijection on $S_0$. Thus, $f_0$ is an
automorphism of $E_0$.
For $s\in S_i$ we have $\gp^is\in S_0$ and $\gp$ acts invertibly on
$S_0$, so we can define a map 
$S\longmap S_0$ by sending $s\in S_i$ to $\gp^{-i}(\gp^is)$.
This map commutes with the $I(\O)\times \G$-action, and it splits
the inclusion $S_0\to S$. Thus, $E_0$ is not only a quotient
 $\LO$-ring of $E$, but also a sub-$\LO$-ring:
\begin{align*}
i\colon\; E_0  \longmap  & E \\
e_0  \longmapsto & (e_0,f_1f_0^{_1}e_0, f_2f_1f_0^{-2} e_0, \ldots,
f_{n-1} \cdots f_1 f_0^{-n+1} e_0).\\
\end{align*}

Now suppose that the $\LO$-ring $E$ has an integral model, i.e.,
that $E$ has an $\O$-sub algebra $A$ which statisfies
\begin{enumerate}
	\item $A$ is finite flat over $\O$;
	\item $\psi_p(A)\subset A$;
	\item $\psi_p\otimes_\O k$ is the Frobenius $x\mapsto x^{\#k}$
		on $A\otimes k$.
\end{enumerate}
The image $A_0$ of $A$ in the quotient ring $E_0$ of $E$ is a
sub-$\LO$-ring of $E_0$ which is reduced and finitely generated as an
$\O$-module and $\O$-torsion free. Thus, $E_0$ has an integral
$\LO$-model. Since $f_0$ is an automorphism of $E_0$ the rings $A_0$
and its subring $f(A_0)$ have the same discriminant. Thus,
$f_0(A_0)=A_0$ and $f_0$ is an automorphism of $A_0$.  This implies
that the map $x\mapsto x^{\#k}$ on $A_0\otimes_\O k$ is an
automorphism, so that $A_0$ is unramified over $\O$.  Conditions (1)
and (2) of Theorem \ref{local-thm} now follow by Proposition
\ref{unramified}.

For the converse, suppose that conditions (1) and (2) hold.
We will produce an integral $\LO$-model of $E=E_0\times \cdots\times E_n$.
Let $R_i$ be the inegral closure of $\O$ in $E_i$.
Since $\I$ acts trivially on $S_0$ the ring $R_0$ has a
unique $\LO$-structure by Proposition \ref{unramified}. 
Now suyppose that
\[
A = i (R_0) \oplus  (0\times \ga_1\times \cdots\times\ga_n)
\]
with $\ga_i$ an ideal in $R_i$. Then the condition
$\psi_p(a)-a^{\#k(\gp)}\in \gp A$
for all $a\in A$ is equivalent to $\ga_i^{\#k(\gp)}\subset\gp\ga_i$ and
$f_i(\ga_{i-1})\subset \gp\ga_{i}$. This holds, for instance
if $\ga_i=\gp^iR_i$, in which case $A$ is an integral $\LO$-model of $E$.
\end{proof}


The integral model that is supplied by the proof is not always
optimal. For instance, for the $\LZ$-ring $\Z[C_4]$ we get a strict
subring. However for the $\LZ$-ring $\Z[V_4]$ the proof provides a
$\LZ$-subring of $\Q[V_4]$ which is strictly larger than $\Z[V_4]$.

\section{Global arguments}
Now assume that $K$ is a global field with ring of integers $\O$.
Let $E$ be a finite \'etale $K$-algebra with a $\LO$-sturcture.
Writing $S=\Hom_K(E,\Kb)$ we thus get an action of
$I(\O)\times \G$ on $S$.

For each maximal ideal $\gp$ of $\O$ we consider the completion
$\O_\gp$, and its quotient field $K_\gp$. Then we obtain an
$\LOp$-structure on the finite \'etale $K_\gp$-algebra $E_\gp=
E\otimes_K K_\gp$. If $A$ is an integral $\LO$-model of $E$, then 
$A\otimes _\O\O_\gp$ is an integral $\LOp$-model of $E_\gp$.

Fixing an embedding $\Kb\to \Kbp$ for each $\gp$ we can view $\Gp$ as
a subgroup of $\G$. The finite \'etale $K_\gp$-algebra $E_\gp$ then
corresponds to the $\Gp$-set that one gets by restricting the action
of $\G$ on $S$ to $\Gp$.

Let us assume that an integral $\LO$-model $A$ of $E$
exists. Let $\Gb$ be the image of $\G$ in $\Map(S,S)$.
Chebotarev's theorem now implies the following: for each
$g\in\Gb$ there is a maximal ideal $\gp=\gp_g$ of $\O$ so that
\begin{enumerate}
	\item the image of $\Ip$ is trivial in $\Gb$;
	\item the image of $F_\gp\in \Gp/\Ip$ in $\Gb$ is $g$;
	\item $A$ is unramified at $\gp$.
\end{enumerate}
By Proposition \ref{unramified}, the action of $g$ on $S$ is the same as the 
action of $\gp_g$ on $S$. Since the $\gp_g$ commute with eachother,
it follows that $\Gb$ is abelian.

It remains top show that the $I(\O)\times \G$-action on $S$ factors through
the Deligne-Ribet monoid of some cycle $\gf$. 

By class field theory, any continuous action of $\G$ on a finite
discrete set $T$, whose image is abelian, factors, by the Artin map,
through the ray class group $\Cl(c(T))$ for a minimal cycle $c(T)$
of $K$, which we call the conductor of $T$. 

Define $\gr\in I(\O)$ by setting
\[
\ord_\gp(\gr)=\inf\{i\ge 0\colon\; \gp^{i+1}S=\gp^i S\}
\]
for all maximal ideals $\gp$ of $\O$. This is well defined because
$\gp S=S$ whenever $\gp$ is unramified in $A$ by Proposition
{unramified}.

We now define the cycle $\gf$ by
\[
\gf={\displaystyle\lcm}_{\gd\mid \gr} \gd \cdot c(\gd S).
\]
Note first that $ c(S)\mid \gf$, so the $\G$-action on $S$ factors
through the Artin map $\G\to \Cl(\gf)$.

Next, we claim that for $\ga\in I(\O)$ coprime to $\gf$ the action of
$\ga$ on $S$ is equal that of its class $[\ga]$ in $\Cl(\gf)$.  It
suffices to prove this for $\ga=\gp$ prime. Then one notes that
$\gr\mid \gf$ so $\gp\nmid \gr$ so $\gp$ acts as a bijection on $S$.
By our local result, $A$ is unramified at $\gp$ and $\gp$ acts as
$F_\gp\in\Gp/\Ip$ on $S$. By the defnition of the Artin symbol, the
action of $[\gp]\in\Cl(\gf)$ is the same. This shows the claim.

Now suppose that $\gd\in I(\O)$ with $\gd\mid \gf$.
Let us write $I_\gd$ for the submonoid of $I(\O)$ consisting
of all $\ga\in I(\O)$ that are coprime to $\gf/\gd$.
We now claim that $I_\gd$ acts by bijections on $\gd S$,
by the definition of $\gf$ is quotient of $\Cl(\gf/\gd)$.
and that the action factors through $\Cl(\gf/\gd)$.
To see this, let $\ga=\gcd(\gd,\gr)$ and write $\gd=\ga\gb$.
By definition of $\gr$ all prime divisors of $\gb$ act bijectively
on $\gd S$, so $c(\ga S)=c(\gd S)$ is coprime to $\gb$.
By definition of $\gf$ we have $c(\ga S) \mid \gf/ga$, and it follows
that $c(\ga S)\mid \gf/\gd$. Thus, the $\G$-action on $\gd S$ 
factors through $\Cl(\gf/\gd)$ and the claim holds.

Since multiplication by any divisor $\gd\in I(\O)$
of $\gf$ gives a bijection
$\Cl(\gf/\gd)\to [\gd]\DR(\gf)^*$ this shows that
the action of $\{\ga\in  I(\O)\colon \gcd(\ga,\gf)=\gd\}$
on $S$ factors through $ [\gd]\DR(\gf)^*$.
Taking the union over all $\gd$ we see that the
$I(\O)$-actions factors through $\DR(\gf)$.

For the converse, assume that the $\G\times I(\O)$-action on $S$
factors through $\DR(\gf)$ for some cycle $\gf$. 

We first show the existence, for each $\gp\in \M$ 
of an integral $\LOp$-model for the $\LOp$-ring $E\otimes_K K_\gp$. 
For $\gp\nmid \gf$ this follows from the definition of the
Artin map and Proposition \ref{unramified}.
So assume $\gp\mid \gf$, and write $\gf=\gp^n\gf'$ with
$\gp\nmid\gf'$. Then $[\gp^k] \in [\gp^n]\Cl(\gf')\subset \DR(\gf)$
for all $k\ge n$. This implies that the action of $\gp$ on
$\bigcap_i\gp^i S=\gp^nS$ is given by the
Artin symbol of $[\gp]\in \Cl(\gf')$, which by Theorem \ref{local-thm}
guarantees existence of an integral $\LOp$-model.

Now let $R$ be the integral closure of $\O$ in $E$. Then
$R$ is finite flat over $\O$ and $E=R\otimes _\O K$.
For all $\gp\nmid \gf$ we are in the unramified case,
and our integral $\LOp$-module is equal to $R\otimes_\O\Op$.
It follows that the intersection $A$ over all $\gp$ of our integral
$\LOp$-module gives a sub-$\O$-algebra of $R$, which is
of finite index, and which is closed under all $\psi_\gp$.
Also, each $\psi_\gp$ are Frobenius lifts, since $A\otimes_\O \Op$
is a $\LOp$-ring. This proves Theorem \ref{global-thm}

\bibliography{references}
\bibliographystyle{plain}

\end{document}